\theoremstyle{definition}
\newtheorem{thm}{Theorem}
\newtheorem{lem}[thm]{Lemma}
\newtheorem{prop}[thm]{Proposition}
\newtheorem{cor}[thm]{Corollary}
\newtheorem{conj}[thm]{Conjecture}
\newcounter{theoremalph}
\DeclareMathOperator{\cd}{cd}
\DeclareMathOperator{\Mod}{Mod}
\begin{document}
\title{The $K(\pi,1)$-conjecture implies the center conjecture for Artin groups}
\author{Kasia Jankiewicz}  
\author{Kevin Schreve}
\address{Department of Mathematics, University of California, Santa Cruz, CA 95064}
\email{kasia@ucsc.edu}
\address{Department of Mathematics, Louisiana State University, Baton Rouge, LA 70806}
\email{kschreve@lsu.edu}
\thanks{The first author was supported by the NSF grant DMS-2203307}
\date{\today}
\subjclass[2020]{20F36, 20F65}
\keywords{Artin groups, center, the $K(\pi,1)$-conjecture}
\maketitle

 \begin{abstract}
 In this note, we prove that the $K(\pi,1)$-conjecture for Artin groups implies the center conjecture for Artin groups. Specifically, every Artin group without a spherical factor that satisfies the $K(\pi,1)$-conjecture has a trivial center.
 \end{abstract}

\section{Introduction}
A Coxeter system $(W,S)$ consists of a group $W$ and a generating set $S$ where $W$ is given by a presentation 
$$W = \langle s \in S| s^2 = (st)^{m_{st}} = 1 \rangle,$$ where $m_{st} \in \{2,3,\dots\} \cup \{\infty\}$. 
The associated Artin group $A$ is given by the presentation 
$$A = \langle s \in S| \underbrace{sts\cdots}_{m_{st} \hspace{.5mm} terms} = \underbrace{tst\cdots}_{m_{st} \hspace{.5mm} terms} \rangle.$$ 
An Artin group $A$ is \emph{spherical} if the corresponding Coxeter group is finite, and otherwise $A$ is \emph{infinite type}.
The Coxeter diagram $\Gamma_S$ is a graph with vertices corresponding to $S$ and where two vertices are joined by an edge if and only if $m_{st}>2$. If $m_{st}\geq 4$ we label the edge with $m_{st}$.
A \emph{special subgroup} of $A$ is a subgroup generated by some subset of $S$. Each special subgroup is itself an Artin group \cite{vanderLek83}.
Each Artin group with standard generating set admits (a possibly trivial) decomposition $A= A_{T_1}\times \dots \times A_{T_n}$ where each $T_i\subseteq S$ defines a connected component of the Coxeter graph $\Gamma_S$.
An Artin group $A$ is \emph{irreducible} if its Coxeter diagram is connected.
We say $A_{T_i}$ is a \emph{spherical factor} of $A$ if $A_{T_i}$ is spherical.
Every irreducible spherical Artin group has an infinite cyclic center \cite{Deligne72, BrieskornSaito72}. Conjecturally, those are the only irreducible Artin groups with nontrivial center.
\begin{conj}[The Center Conjecture]
Every Artin group without a spherical factor has trivial center.
\end{conj}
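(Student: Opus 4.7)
The plan is to attack the Center Conjecture directly. First I would reduce to the irreducible infinite-type case. From the decomposition $A = A_{T_1}\times\cdots\times A_{T_n}$ one has $Z(A) = Z(A_{T_1})\times\cdots\times Z(A_{T_n})$, and the hypothesis of having no spherical factor passes to each connected component. So it suffices to show $Z(A)=1$ whenever $A$ is irreducible and infinite type. Standard reductions also allow me to work with a chosen central element $z$ and seek a contradiction from $z\neq 1$.

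Next I would seek a natural object on which $A$ acts well enough to force any central element to collapse. Three complementary strategies suggest themselves: (i) put $A$ on a $\mathrm{CAT}(0)$ complex such as the Deligne complex or its modified (Charney--Davis) variant, note that a central element must permute the translates of every standard vertex and preserve the tree of parabolics, and try to show it must stabilize a vertex whose stabilizer is a spherical special subgroup of infinite index --- which, in an infinite-type irreducible Coxeter system, is impossible unless $z=1$; (ii) establish (or invoke) acylindrical hyperbolicity of $A$, which forces $Z(A)$ to be finite, and then combine this with torsion-freeness to conclude $Z(A)=1$; (iii) exploit Garside-like or positive-word structure to argue that a central element admits a positive representative of trivial length.

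These approaches already succeed in substantial cases. Charney--Davis proved the conjecture for FC-type Artin groups through the modified Deligne complex; right-angled Artin groups are handled combinatorially via the link condition; many large-type, 2-dimensional, and Euclidean cases follow from acylindrical hyperbolicity results of Calvez--Wiest, Charney--Morris-Wright, Martin--Przytycki, and others; and spherical or mixed cases are controlled by Garside theory. My plan is to assemble these case-by-case resolutions and then attempt to fill the gaps via a uniform analysis of the Deligne complex, showing that a central element must fix a fundamental domain for the spherical parabolic structure and then invoking the local spherical structure at vertices.

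The main obstacle is exactly that this is an open conjecture in full generality: there is no known single geometric model that simultaneously exhibits all irreducible infinite-type Artin groups as nicely acting groups, and the intermediate classes --- those which are neither manifestly hyperbolic, nor of FC type, nor two-dimensional --- elude every current technique. Overcoming this gap is the reason that partial results proceed conditionally (e.g.\ under the $K(\pi,1)$-conjecture, whose Salvetti-complex topology would pin down the cohomological dimension and thereby restrict possible central subgroups); finding an unconditional geometric framework that covers every remaining case is the crux of the problem.
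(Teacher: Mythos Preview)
Your proposal is not a proof but a research plan, and you are right to treat the statement as open: the paper does not prove the Center Conjecture unconditionally either. It is stated there as a conjecture, and the paper's actual theorem (Theorem~\ref{t:main}, sharpened as Theorem~\ref{thm:main general}) is the conditional implication: if an Artin group $A_S$ without spherical factor satisfies the $K(\pi,1)$-conjecture---or more weakly has $\cd A_S = |T|$ for some spherical subset $T\subseteq S$---then $Z(A_S)=1$.

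The paper's method is quite different from the three strategies you sketch. It uses neither the Deligne complex, nor acylindrical hyperbolicity, nor Garside structure. Instead it combines two ingredients. First, a cohomological-dimension argument: any central $y\neq 1$ must have a power in $A_T$, since otherwise $\langle A_T,y\rangle \cong A_T\times\mathbb Z$ would have cohomological dimension $\cd A_S + 1$; hence $y^m \in Z(A_T) = \langle z_{T_1},\dots,z_{T_n}\rangle$. Second, the Crisp--Paris representation $\rho$ of free-of-infinity Artin groups into mapping class groups: under $\rho$ each $z_{T_i}$ maps to a multitwist about the boundary multicurve of a subsurface $\Sigma_{T_i}$, and the no-spherical-factor hypothesis guarantees a generator $s$ whose curve meets that multicurve, so a Koberda-type non-commutation lemma gives $[\rho(s),\rho(y^m)]\neq 1$, contradicting centrality. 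The general (not free-of-infinity) case is handled by induction on $|S|$ via the amalgamated-product splitting across an $\infty$-edge, together with a quotient trick replacing remaining $\infty$-labels by $7$.

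So the genuine gap in your proposal is exactly the one you name: no one knows how to fill it. The paper does not fill it either; it sidesteps it by assuming the $K(\pi,1)$-conjecture to obtain precisely the cohomological-dimension control $\cd A_S = |T|$ that makes the mapping-class-group argument go through. Your parenthetical remark about conditional progress via the $K(\pi,1)$-conjecture and cohomological dimension is in fact the route the paper takes.
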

 
 The center conjecture holds for FC-type Artin groups and $2$-dimensional Artin groups \cite{GodelleParis2012}. 
Charney and Morris-Wright have shown the center conjecture holds for Artin groups whose defining graphs are not stars of a single vertex \cite{CharneyMorrisWright2019}.
Godelle and Paris further showed that if all Artin groups with $m_{st} \ne \infty$ for all $s,t \in S$ satisfy the center conjecture, then all Artin groups satisfy the center conjecture \cite{GodelleParis2012}.

The FC-type and  $2$-dimensional Artin groups also satisfy the $K(\pi,1)$-conjecture \cite{CharneyDavisCAT0}.
\begin{conj}[The $K(\pi, 1)$-conjecture]
The orbit space $\mathcal H(W)/W$ of a complexified hyperplane arrangement associated to a Coxeter system $(W,S)$ is a $K(\pi,1)$ for the Artin group $A$ associated to $W$.
\end{conj}
For a precise definition of $\mathcal H(W)$ and more background, see e.g. the survey paper \cite{Paris2014}.
It is known that the fundamental group of $\mathcal H(W)/W$ is equal to $A$, so the conjecture is about the asphericity of $\mathcal H(W)/W$. 
In this note, we prove the following:

\begin{thm}
\label{t:main} 
Every Artin group without a spherical factor that satisfies the $K(\pi, 1)$-conjecture has trivial center.
\end{thm}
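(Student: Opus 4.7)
My plan is to combine the finiteness the Salvetti complex inherits from the $K(\pi,1)$-conjecture with the existing reductions of the center conjecture, and then derive a homological obstruction to the existence of a central element. First, because the center of a direct product is the product of centers and both the Salvetti complex and the $K(\pi,1)$-property respect the decomposition of $A$ along the connected components of $\Gamma_S$, I reduce immediately to the case that $A$ is irreducible and, by hypothesis, non-spherical. Under $K(\pi,1)$ the Salvetti complex $X_A$ is then a finite $K(A,1)$ of dimension $d$ equal to the maximum rank of a spherical parabolic subgroup; in particular $A$ is torsion-free of cohomological dimension $d$.

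Suppose for contradiction $Z(A)\neq 1$. By torsion-freeness, $Z(A)$ contains an infinite cyclic subgroup $\langle z\rangle\cong\mathbb{Z}$; this is an infinite amenable normal subgroup of $A$, so by Cheeger--Gromov every $L^2$-Betti number of $A$ vanishes, and in particular $\chi(A)=\chi(X_A)=0$.

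To contradict this vanishing, I would invoke the reductions of Charney--Morris-Wright (the center conjecture holds whenever $\Gamma_S$ is not a star of a single vertex) and Godelle--Paris (reduction to the case $m_{st}<\infty$) in order to restrict attention to Artin groups whose Coxeter diagram is a star of a single vertex with all labels finite. The main obstacle is then to produce, for each such irreducible non-spherical $A$, a nonvanishing homological invariant of $X_A$. In many cases the Euler characteristic $\chi(X_A)=\sum_{T\ \mathrm{spherical}}(-1)^{|T|}$ is already nonzero and suffices, but it can vanish (for example, for the star with three leaves labelled $(3,4,5)$); hence the essential work is to identify and compute a finer invariant, such as a top-dimensional or weighted $L^2$-Betti number associated with the spherical nerve in the spirit of Davis--Okun, whose nonvanishing for irreducible non-spherical Artin groups satisfying $K(\pi,1)$ completes the argument.
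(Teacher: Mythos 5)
The first two paragraphs of your proposal are sound: the $K(\pi,1)$-conjecture does give a finite $K(A,1)$ of dimension equal to the maximal rank of a spherical parabolic, hence finite $\cd$ and torsion-freeness, and an infinite cyclic normal subgroup of a group of type $F$ does force $\chi(A)=0$ by Cheeger--Gromov. The Charney--Morris-Wright and Godelle--Paris reductions to stars with finite labels are also legitimate. But as you yourself acknowledge, the argument stops there: the Euler characteristic can in fact vanish for irreducible non-spherical $A$ (your star with leaves labelled $3,4,5$ is a genuine example, with $\chi = 1 - 4 + 6 - 3 = 0$), and you have not identified, let alone computed, the ``finer invariant'' whose nonvanishing would close the contradiction. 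Nonvanishing of a top-dimensional (weighted) $L^2$-Betti number for irreducible non-spherical Artin groups is itself a deep open problem in the Davis--Okun/Singer circle of ideas; making it the engine of the proof replaces one conjecture with a harder one. So this is a genuine gap, not a deferred computation.

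The paper avoids $L^2$-invariants entirely and gets a contradiction by localizing a putative central element $y$ inside a spherical parabolic and then watching it act on a surface. Concretely: choose a maximal spherical $T\subseteq S$ with $\cd A_S=\cd A_T$, and observe that if no power of $y$ lay in $A_T$ then $\langle A_T,y\rangle\cong A_T\times\mathbb Z$ would have cohomological dimension $\cd A_S +1$, impossible. Hence $y^m\in Z(A_T)=\prod_i\langle z_{T_i}\rangle$ is a nontrivial product of the central generators of irreducible spherical factors $A_{T_i}$. Since $A_S$ has no spherical factor, each $T_i$ admits an $s_i\in S\setminus T$ not commuting with $z_{T_i}$. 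The Crisp--Paris representation of the (reduced to free-of-infinity) Artin group into a mapping class group sends $s_i$ and $z_{T_i}^2$ to multitwists about multicurves, and Lemma~\ref{lem:intersecting boundary} together with Koberda's theorem show these multicurves genuinely intersect, so the images do not commute; hence $[y,s_1]\neq 1$, a contradiction. The general case is handled by induction over $\infty$-labelled edges using the amalgam decomposition and \cite{GodelleParis2012}. The cohomological dimension trick plays the role your $L^2$-vanishing step plays, but rather than needing a nonvanishing cohomological invariant of the whole group, it only needs the elementary subadditivity $\cd(G\times\mathbb Z)=\cd G+1$, and then the obstruction to centrality is combinatorial/geometric rather than homological. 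You should keep your first two paragraphs and replace the speculative third with the $\cd$-localization plus the mapping class group non-commutation argument.
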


In fact, we only need the following consequence of the $K(\pi, 1)$-conjecture: an Artin group $A$ which satisfies the $K(\pi,1)$-conjecture has finite cohomological dimension which is realized by a spherical subgroup, i.e. $\cd(A) = \cd(A_T) = |T|$ for some spherical subset $T$. See Theorem~\ref{thm:main general} for the more general statement of our main theorem.

For example, in \cite{McCammondWinterBraids} McCammond mentions that the center conjecture is unknown for the Artin group $A$ with the Coxeter diagram as in Figure~\ref{fig:smallest mysterious}.
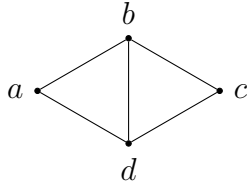
\begin{figure}
\begin{tikzpicture}[scale = 0.7]
\node[circle, draw, fill, inner sep = 0pt,minimum width = 2pt, label = below:$d$] (a) at (0,-1) {};
\node[circle, draw, fill, inner sep = 0pt,minimum width = 2pt, label = above:$b$] (b) at (0,1) {};
\node[circle, draw, fill, inner sep = 0pt,minimum width = 2pt, label = left:$a$] (c) at (-1.73,0) {};
\node[circle, draw, fill, inner sep = 0pt,minimum width = 2pt, label = right:$c$] (d) at (1.73,0) {};
\draw (a) -- (c) -- (b) -- (d) -- (a) -- (b);
\end{tikzpicture}
\caption{The Coxeter diagram of the smallest mysterious Artin group according to McCammond.}\label{fig:smallest mysterious}
\end{figure}
The group $A$ is given by the presentation
$$\langle a,b,c,d \mid aba = bab, bcb=cbc, cdc = dcd, dad = ada, bdb = dbd, ac = ca\rangle.$$
The $K(\pi, 1)$-conjecture holds for $A$ by a theorem of Charney \cite{Charney2004}. That allows us to answer the question of McCammond.

\begin{cor}
The Artin group with the Coxeter diagram as in Figure~\ref{fig:smallest mysterious} has trivial center.
\end{cor}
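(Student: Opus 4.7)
The plan is to derive the corollary as a direct application of Theorem~\ref{t:main} to the specific Artin group $A$ from Figure~\ref{fig:smallest mysterious}. To do so, I need to verify two hypotheses for this group: that $A$ satisfies the $K(\pi,1)$-conjecture, and that $A$ has no spherical factor. The first is handed to us by \cite{Charney2004}, as the text already notes, so the real content of the argument is the second premise.

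To verify that $A$ has no spherical factor, I would first observe that the Coxeter diagram in Figure~\ref{fig:smallest mysterious} is connected, so $A$ is irreducible and admits no nontrivial product decomposition. Consequently, the only way $A$ could have a spherical factor is if $A$ itself is spherical, i.e.\ if the associated Coxeter group $W$ is finite. So it suffices to exhibit infiniteness of $W$.

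For this, I would pass to a standard parabolic subgroup. The subgroup of $W$ generated by $\{a,b,d\}$ inherits the Coxeter presentation $\langle a,b,d \mid a^2=b^2=d^2=(ab)^3=(bd)^3=(ad)^3=1\rangle$, which is the affine Weyl group $\tilde A_2$ and hence infinite. This shows $W$ is infinite, so $A$ is not spherical, and therefore $A$ has no spherical factor. Combining with Charney's theorem, Theorem~\ref{t:main} applies and yields that $A$ has trivial center.

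I do not anticipate any serious obstacle: the only nonformal step is the identification of the standard parabolic with $\tilde A_2$, which rests on the standard fact that special subgroups of Coxeter systems retain the expected presentation, and on the well-known infiniteness of the affine type $\tilde A_2$ Coxeter group. Alternatively, one could simply cite \cite{vanderLek83} (used above for special subgroups of Artin groups) together with any enumeration of finite Coxeter diagrams, none of which contains a cycle, to rule out sphericity more directly.
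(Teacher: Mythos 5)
Your proposal is correct and follows the paper's approach exactly: apply Theorem~\ref{t:main}, citing Charney \cite{Charney2004} for the $K(\pi,1)$-conjecture, and verify the no-spherical-factor hypothesis (which the paper leaves implicit). Your verification via the $\tilde A_2$ parabolic on $\{a,b,d\}$ is a correct and tidy way to establish that $W$ is infinite, hence that the irreducible group $A$ is not spherical.
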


Another class of Artin groups which satisfy the $K(\pi,1)$-conjecture are the \emph{locally reducible} Artin groups, where all irreducible spherical subgroups are of rank $\le 2$ \cite{Charney2000}. There are many of these with $m_{st} \ne \infty$ for each $s,t \in S$, and as far as we know the center conjecture was open here.

\begin{cor}
Every locally reducible Artin group without spherical factor has trivial center.
\end{cor}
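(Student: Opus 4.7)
The plan is to deduce this corollary immediately from Theorem~\ref{t:main}, which states that every Artin group without a spherical factor that satisfies the $K(\pi,1)$-conjecture has trivial center. The only additional input required is the fact that every locally reducible Artin group satisfies the $K(\pi,1)$-conjecture, and this is precisely the content of Charney's theorem \cite{Charney2000}, already mentioned in the paragraph preceding the corollary.

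Concretely, let $A$ be a locally reducible Artin group with no spherical factor. First I would invoke \cite{Charney2000} to conclude that $A$ satisfies the $K(\pi,1)$-conjecture. Then I would apply Theorem~\ref{t:main} to the group $A$ directly, which outputs that the center of $A$ is trivial. No auxiliary construction is needed; the corollary is a formal combination of the two results.

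The one small point to watch is bookkeeping around definitions: I would verify that the notion of \emph{locally reducible} used in \cite{Charney2000} coincides with the one recalled in this excerpt, namely that every irreducible spherical standard parabolic subgroup has rank at most $2$. Equivalently, every spherical standard parabolic subgroup of rank $\geq 3$ is reducible, so that Charney's hypothesis is genuinely being satisfied. Once this is confirmed, there is no further obstacle, and the corollary is immediate.
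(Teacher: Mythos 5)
Your proof is correct and is exactly the deduction the paper intends: cite Charney's result that locally reducible Artin groups satisfy the $K(\pi,1)$-conjecture and then apply Theorem~\ref{t:main}. The paper does not spell out a separate argument because the corollary is an immediate consequence, just as you say.
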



\section{Representations of Artin groups in mapping class groups}
An Artin group $A$ with standard generating set $S$ has \emph{small type} if $m_{st}\in\{2,3\}$ for all $s,t\in S$. 
In this section we recall a representation of small type Artin groups in mapping class groups, due to Crisp-Paris \cite{CrispParis2001}, and analyze where certain elements of $A$ are mapped.

Let $\Sigma$ be a surface. 
A \emph{multicurve} is a collection of pairwise disjoint simple closed curves on $\Sigma$. 
We say that two multicurves are \emph{disjoint} if their isotopy classes have disjoint representatives. 
A \emph{multitwist} about a multicurve $\gamma$ is a product of non-trivial powers of Dehn twists about simple closed curves in $\gamma$ (the powers can be different for different curves).
We shall need the following lemma about commuting multitwists about multicurves.
\begin{lem}\label{lem:disjoint multicurves}
Let $\gamma$ and $\gamma'$ be essential multicurves on $\Sigma$ that do not share a simple closed curve. Let $T_\gamma$ and $T_{\gamma'}$ be the associated multitwists along $\gamma$ and $\gamma'$. Then $T_\gamma$ and $T_{\gamma'}$ commute if and only $\gamma$ and $\gamma'$ are disjoint.
\end{lem}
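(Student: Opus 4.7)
The plan is to prove both directions separately. For the easy ``if'' direction, if $\gamma$ and $\gamma'$ are disjoint essential multicurves I can choose isotopy representatives contained in pairwise disjoint annular neighborhoods of their components. Since Dehn twists supported on disjoint annuli commute, so do $T_\gamma$ and $T_{\gamma'}$, as do arbitrary products of their powers.

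For the ``only if'' direction, the key tool is the conjugation identity $\phi T_\alpha^n \phi^{-1} = T_{\phi(\alpha)}^n$ valid for any mapping class $\phi$, simple closed curve $\alpha$, and integer $n$. Extending multiplicatively to multitwists gives
$$T_{\gamma'} T_\gamma T_{\gamma'}^{-1} = T_{T_{\gamma'}(\gamma)},$$
where $T_{\gamma'}(\gamma)$ denotes the weighted multicurve whose components are the images $T_{\gamma'}(\alpha_i)$ of the components of $\gamma$, each carrying the same twist weight as $\alpha_i$. The assumption that $T_\gamma$ and $T_{\gamma'}$ commute then yields $T_\gamma = T_{T_{\gamma'}(\gamma)}$. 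Since the multitwist map from weighted essential multicurves into $\Mod(\Sigma)$ is injective, this forces $T_{\gamma'}(\gamma) = \gamma$ as weighted multicurves. Equivalently, $T_{\gamma'}$ induces a weight-preserving permutation on the isotopy classes of components of $\gamma$; by symmetry the same holds with the roles of $\gamma$ and $\gamma'$ swapped.

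To finish, I would pass to a power $k$ for which the induced permutation on components of $\gamma$ is trivial, so that $T_{\gamma'}^k$ fixes each component $\alpha$ of $\gamma$ up to isotopy. I would then invoke the classical intersection-number estimate: if some component $\beta_j$ of $\gamma'$ satisfies $i(\alpha, \beta_j) > 0$, then $i(T_{\gamma'}^k(\alpha), \alpha)$ grows at least linearly in $|k|$, contradicting $T_{\gamma'}^k(\alpha) = \alpha$. Hence $\alpha$ is either disjoint from or isotopic to each $\beta_j$, and since $\gamma$ and $\gamma'$ share no simple closed curve by hypothesis, it must be disjoint. Running over all components of $\gamma$ gives $\gamma \cap \gamma' = \emptyset$, as required. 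The main obstacle is this last step: composing several Dehn twists can in principle produce cancellations that suppress the leading growth in $|k|$, so some care is needed (by taking $k$ sufficiently large, or by isolating a single ``closest'' $\beta_j$) to guarantee that the intersection-number bound actually applies.
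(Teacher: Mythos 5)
Your argument is correct, but it takes a genuinely different route from the paper. The paper's proof is shorter but relies on heavier machinery: it invokes Koberda's theorem that sufficiently high powers of Dehn twists about a collection of pairwise non-isotopic curves generate a right-angled Artin group, and then reads off disjointness from the normal-form characterization of commuting elements in a RAAG (Hermiller--Meier). Your proof is more elementary and self-contained: the conjugation identity $T_{\gamma'} T_{\gamma} T_{\gamma'}^{-1} = T_{T_{\gamma'}(\gamma)}$, the injectivity of the map from weighted multicurves to multitwists, passing to a power killing the induced permutation on components, and then the intersection-number growth estimate. This is essentially the classical pre-Koberda proof of this fact, closer in spirit to arguments in Ivanov's book or Farb--Margalit's \emph{Primer}. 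What Koberda's theorem buys the paper is brevity; what your approach buys is not needing a deep theorem whose proof is far more involved than the statement being established.

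On the ``main obstacle'' you flag at the end: your worry about cancellations between twists of opposite signs is unfounded. The relevant estimate (Proposition 3.4 in Farb--Margalit, essentially due to Ivanov) states that for a multitwist $M = \prod_j T_{\beta_j}^{e_j}$ about a multicurve with all $e_j \neq 0$ and any simple closed curves $a,b$, one has
\[
\Bigl|\, i(M(a),b) - \sum_j |e_j|\, i(a,\beta_j)\, i(\beta_j,b) \,\Bigr| \le i(a,b).
\]
Crucially the coefficient is $|e_j|$, not $e_j$: twists along disjoint curves do not interfere, regardless of sign. Applying this with $a=b=\alpha$ and $M = T_{\gamma'}^{k}$ gives $i(T_{\gamma'}^{k}(\alpha),\alpha) \ge |k| \sum_j |e_j|\, i(\alpha,\beta_j)^2$, so $T_{\gamma'}^k(\alpha)=\alpha$ already forces $i(\alpha,\beta_j)=0$ for all $j$ and any single $k\neq 0$; no ``sufficiently large $k$'' or ``closest curve'' device is needed. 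With that substitution your proof is complete and rigorous, and for the record one should also cite that pairwise-disjointness in isotopy classes can be realized simultaneously, so that $\gamma$ and $\gamma'$ admit globally disjoint representatives.
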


\begin{proof}
The if direction is clear, so suppose $T_\gamma$ and $T_{\gamma'}$ commute. Then $T_{\gamma}^N$ and $T_{\gamma'}^N$ commute for all $N$. By a theorem of Koberda \cite{Koberda12}, the group generated by large powers of Dehn twists of all curves in $\gamma \cup \gamma'$ is a right-angled Artin group. Therefore, we have two words $w$ and $w'$ in a RAAG, where $w$ and $w'$ are nontrivial powers of commuting generators. It follows from the normal form for RAAG's that these commute exactly when each generator in $w$ commutes with each generator of $w'$ \cite{HermillerMeier95}. Therefore, the curves in $\gamma$ can be isotoped to be disjoint from the curves in $\gamma'$. 
\end{proof}

Let $A_S$ be a small type Artin group with standard generating set $S$. 
We can build an associated surface
$$\Sigma_S = \bigcup_{s\in S} An_{s}$$
 where each $An_s$ is an annulus. 
We denote the meridian of $An_s$ by $\gamma_s$. 
If $m_{st} = 3$, then we arrange that the annuli $An_s, An_t$ intersect in a single square so that $\gamma_s, \gamma_t$ intersect transversely at one point, and any triple intersection of annuli is empty. 
\begin{figure}
\begin{tikzpicture}
\begin{scope}[shift={(0,1)}]
\node[circle, draw, fill, inner sep = 0pt,minimum width = 2pt, color=NavyBlue] (a) at (0,0) {};
\node[circle, draw, fill, inner sep = 0pt,minimum width = 2pt, color=red] (b) at (0:1) {};
\node[circle, draw, fill, inner sep = 0pt,minimum width = 2pt, color=ForestGreen] (c) at (60:1) {};
\node[circle, draw, fill, inner sep = 0pt,minimum width = 2pt, color=Dandelion] (d) at (120:1) {};
\draw (a) -- (c) -- (d) -- (a) -- (b);
\end{scope}
\begin{scope}[shift={(4,0)}, scale = 0.7]
\draw (0.5,0) -- (10,0);
\draw (10,1) -- (0.5,1);
\draw[densely dotted, semithick] (0.5,0) -- (0.5,1);
\draw[densely dotted, semithick] (10,0) -- (10,1);
\draw[NavyBlue, thick ] (0.5, 0.5) -- (10,0.5); 
\draw[densely dotted, semithick] (1,-1) -- (1+0.866, -1.5);
\draw (1+0.866, -1.5) -- (4.5+0.866, -1.5+7*0.866);
\draw[densely dotted, semithick] (4.5+0.866, -1.5+7*0.866) -- (4.5, -1+7*0.866);
\draw (4.5, -1+7*0.866) -- (1,-1);
\draw[Dandelion, thick ] (1+0.433, -1.25) -- (4.5+0.433, -1.25 +7*0.866);
\draw[densely dotted, semithick] (7,-1) -- (7-0.866, -1.5);
\draw (7-0.866, -1.5) -- (3.5-0.866, -1.5 + 7*0.866);
\draw[densely dotted, semithick] (3.5-0.866, -1.5 + 7*0.866) -- (3.5, -1 +7*0.866);
\draw (3.5, -1 +7*0.866) -- (7,-1);
\draw[ForestGreen, thick ] (7-0.433, -1.25) -- (3.5-0.433, -1.25 +7*0.866);
\draw[densely dotted, semithick] (8,-1) -- (9,-1);
\draw (9,-1) -- (9,5);
\draw[densely dotted, semithick] (9,5) -- (8,5);
\draw (8,5) -- (8,-1);
\draw[red, thick] (8.5, -1) -- (8.5, 5); 
\end{scope}
\end{tikzpicture}
\caption{An example of the surface $\Sigma_S$ (right) corresponding to the Artin group $A$ with the standard generating set $S$ whose Coxeter graph $\Gamma$ is illustrated (left). For each vertex $s$ in $\Gamma$, there is a corresponding rectangle in the right picture, which is glued along the dotted sides to an annulus $An_s$. Its meridian $\gamma_s$ has the same color as $s$.}\label{fig:surface}
\end{figure}
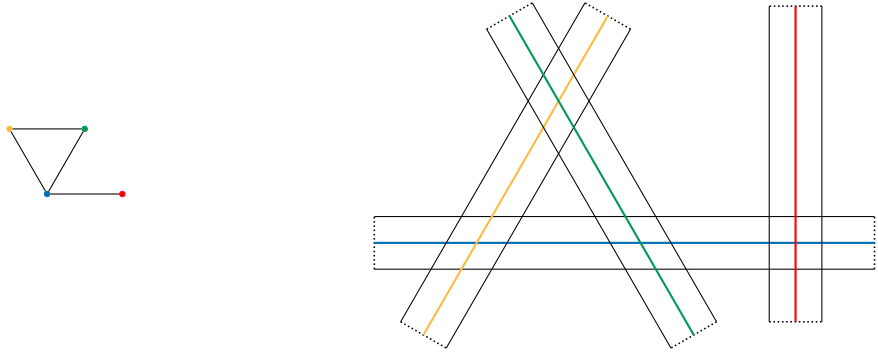
If $m_{st} = 2$, then $An_s, An_t$ will be  disjoint.
See Figure~\ref{fig:surface} for an example.
There is a representation of $A_S$ in the mapping class group of $\Sigma_S$ where each generator $s\in S$ is mapped to the Dehn twist about the simple closed curve $\gamma_s$ (see Proposition~\ref{prop:small type rep}).
Full details can be found in \cite{CrispParis2001}. 
Each subset $T$ of $S$ has an associated subsurface $\Sigma_T$ of $\Sigma_A$. 
By construction, the subsurface $\Sigma_T$ for an irreducible spherical subset $T\subseteq S$ and the induced homomorphism $\rho_T:A_T\to \Mod(\Sigma_T)$ are exactly the Perron-Vannier representation of small type spherical Artin groups \cite{PerronVannier1996} (see also \cite{JankiewiczSchreveRAAGsubgroupsArtin}).
For every spherical subset $T$, the generator of the center of the group $A_T$ is denoted by $z_T$.  
By \cite[Prop 2.12]{LabruereParis01} $\rho(z_T^4)$ is a multitwist about the boundary of $\Sigma_T$. We denote that multicurve by $\Gamma_T$.

Let $K_S$ be the graph that is the union $\bigcup_{s\in S} \gamma_s\subseteq \Sigma_S$. 
By construction, there is a deformation retraction $r: \Sigma_S \to K_S$. 
Thus $H_1(\Sigma_S) = H_1(K_S)$, and in particular if $S$ is spherical then $H_1(\Sigma_S) = \mathbb Z^S$. For any $S'\subset S$, the map $H_1(\Sigma_{S'})\to H_1(\Sigma_S)$ induced by the inclusion $\Sigma_{S'}\hookrightarrow \Sigma_S$ is injective and $H_1(\Sigma_{S'})\subsetneq H_1(\Sigma_S)$.

Every closed path in a graph is homotopic to a cycle (i.e.\ a closed path without backtracks). 
In particular, every homotopy class of a simple closed curve in $\Sigma_S$ can be realized as a cycle in $K_S$. 
We will now view all the simple closed curves in $\Sigma_S$ as cycles in $K_S$.
In particular, we view components of $\gamma_T$ for any spherical subset $T\subseteq S$ as cycles in $K_S$.

\begin{lem}\label{lem:intersecting boundary}
Let $T\subseteq S$ be an irreducible spherical subset, and $s\in S-T$. Then $\gamma_s$ intersects $\gamma_T$ if and only if $\gamma_s$ intersects $\gamma_t$ for some $t\in T$.
\end{lem}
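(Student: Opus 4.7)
The plan is to work directly with the subsurface $\Sigma_T = \bigcup_{t\in T} An_t$, whose boundary multicurve is $\Gamma_T$, and to track the position of $\gamma_s \subseteq An_s$ relative to $\Sigma_T$. Since $An_s$ meets $An_t$ exactly when $m_{st}=3$, and then in a single square, the argument will reduce to a local picture at each such square.

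For the ``only if'' direction I would argue by contrapositive. If $\gamma_s$ is disjoint from $\gamma_t$ for every $t\in T$, then $m_{st}=2$ for every such $t$, so by construction $An_s$ is disjoint from each $An_t$ with $t\in T$, and hence from $\Sigma_T$. Since $\gamma_s \subseteq An_s$ and $\Gamma_T \subseteq \Sigma_T$, the curves $\gamma_s$ and $\Gamma_T$ are disjoint. This direction is immediate from the construction.

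For the ``if'' direction, suppose $m_{st}=3$ for some $t\in T$, so that $Q := An_s \cap An_t$ is a single square and $\gamma_s$ crosses $Q$ transversely, entering and exiting through the two opposite sides of $Q$ that lie on $\partial An_t$. I would then check that each entry/exit point $p$ actually lies on $\partial \Sigma_T = \Gamma_T$: one side of $\partial An_t$ near $p$ is inside $An_t \subseteq \Sigma_T$, and I need the other side, which lies in the interior of $An_s$, to be outside every $An_{t'}$ with $t' \in T$. This is exactly where the assumption of empty triple intersections, $An_s \cap An_t \cap An_{t'} = \emptyset$, is used: it prevents the relevant portion of $\partial An_t$ from being absorbed into the interior of $\Sigma_T$ by some neighboring $An_{t'}$. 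I expect this local verification to be the only real content of the proof; once it is in place, the two crossings of $\gamma_s$ through $Q$ supply the required intersection with $\Gamma_T$.
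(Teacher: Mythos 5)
Your ``only if'' direction is fine and actually more elementary than the paper's: you argue directly from the construction that $An_s$ is disjoint from $\Sigma_T$, whereas the paper routes through Lemma~\ref{lem:disjoint multicurves} (commuting multitwists, via Koberda) to conclude disjointness of the multicurves. Both work; yours avoids the mapping-class-group machinery.

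The ``if'' direction, however, has a genuine gap. You identify a point where the honest representative of $\gamma_s$ crosses the honest representative of $\partial\Sigma_T$ transversely, but ``$\gamma_s$ intersects $\gamma_T$'' is a statement about isotopy classes: you must rule out that $\gamma_s$ can be isotoped in $\Sigma_S$ to be disjoint from $\partial\Sigma_T$. Nothing in your local picture at the square $Q$ does this. Indeed, $\gamma_s$ meets $\partial\Sigma_T$ in an even number of points, the algebraic intersection number with a boundary multicurve is automatically zero, and a priori there could be innermost bigons between $\gamma_s$ and a component of $\partial\Sigma_T$ allowing the crossings to be isotoped away. The paper closes exactly this gap by a global argument: if $\gamma_s$ could be isotoped off $\partial\Sigma_T$, then since the geometric intersection number $i(\gamma_s,\gamma_t)=1$ forces $\gamma_s$ to still meet $\gamma_t\subseteq\Sigma_T$, the curve $\gamma_s$ would have to lie entirely inside $\Sigma_T$; but then $[\gamma_s]\in H_1(\Sigma_T)=H_1(K_T)$, contradicting $H_1(K_T)\subsetneq H_1(K_{T\cup\{s\}})$. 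You would need to supply some isotopy-invariant input of this kind (a homology class, a positive geometric intersection number, or a bigon-criterion analysis) to complete your ``if'' direction; the purely local verification at $Q$ is not enough.
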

\begin{proof} If $\gamma_s$ does not intersect any $\gamma_t$ for $t\in T$, then $\rho(s)$ commutes with $\rho(t)$ for all $t \in T$. 
Then $\rho(s)$ must also commute with $\rho(z_T^2)$, and by Lemma~\ref{lem:disjoint multicurves} $\gamma_s$ and $\gamma_T$ are disjoint.

Now suppose that $\gamma_s$ intersects $\gamma_t$ for some $t\in T$. By construction $\gamma_s, \gamma_t$ intersect exactly once. Suppose $\gamma_s$ can be isotoped in $\Sigma_S$ to be disjoint from $\partial \Sigma_T$. Then $\gamma_s\subseteq \Sigma_T$, since $\gamma_s, \gamma_t$ still must intersect. In particular, $[\gamma_s]\in H_1(\Sigma_T) = H_1(K_T)$. This is a contradiction, since $H_1(K_T)\subsetneq H_1(K_{T\cup\{s\}})$.
\end{proof}

\begin{lem}\label{lem:disjoint sphericals}
Let $T_1, T_2\subseteq S$ be two disjoint, irreducible, spherical subsets. Then any component of $\gamma_{T_1}$ and any component of $\gamma_{T_2}$ are non-isotopic and disjoint in $\Sigma_S$. 
\end{lem}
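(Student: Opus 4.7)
I plan to establish disjointness and non-isotopy in tandem, combining the earlier lemmas in the excerpt with a homological obstruction.

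\emph{Disjointness.} I will use Lemma~\ref{lem:intersecting boundary} to reduce to commutation of the multitwists, then invoke Lemma~\ref{lem:disjoint multicurves}. The hypothesis that $T_1, T_2$ are disjoint — which I read as $T_1 \cap T_2 = \emptyset$ together with $m_{st} = 2$ for all $s \in T_1, t \in T_2$, since otherwise $T_1 = \{a\}, T_2 = \{b\}$ with $m_{ab} = 3$ would already contradict the conclusion — ensures that for each $s \in T_2 \subseteq S - T_1$, the meridian $\gamma_s$ is disjoint from every $\gamma_t$ with $t \in T_1$. Lemma~\ref{lem:intersecting boundary} then gives $\gamma_s \cap \gamma_{T_1} = \emptyset$, and so $\rho(s)$ commutes with the multitwist $\rho(z_{T_1}^4)$ for every $s \in T_2$. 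Consequently $\rho(z_{T_1}^4)$ and $\rho(z_{T_2}^4)$ commute, and Lemma~\ref{lem:disjoint multicurves} yields disjointness of $\gamma_{T_1}$ and $\gamma_{T_2}$ — conditional on them sharing no simple closed curve.

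\emph{Non-isotopy.} The no-shared-curve clause is precisely non-isotopy of any component of $\gamma_{T_1}$ with any component of $\gamma_{T_2}$, so proving non-isotopy also closes the previous step. Suppose for contradiction that $c_1 \subseteq \gamma_{T_1}$ is isotopic in $\Sigma_S$ to $c_2 \subseteq \gamma_{T_2}$. Then $[c_1] = \pm[c_2]$ in $H_1(\Sigma_S) = H_1(K_S)$. By the injectivity of $H_1(\Sigma_{T_i}) = H_1(K_{T_i}) \hookrightarrow H_1(K_S)$ noted in the excerpt, each $[c_i]$ lies in the image of $H_1(K_{T_i})$. Under the disjointness hypothesis, the subgraphs $K_{T_1}$ and $K_{T_2}$ are edge-disjoint in $K_S$; since $H_1$ of a graph is its group of $1$-cycles, the two images intersect trivially in $H_1(K_S)$. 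Hence $[c_i] = 0$, contradicting the nontriviality of each individual boundary component of the Perron-Vannier surface $\Sigma_{T_i}$.

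The main obstacle I expect is verifying that every individual component of $\gamma_{T_i} = \partial \Sigma_{T_i}$ is homologically nontrivial in $\Sigma_{T_i}$. This is immediate in the singleton case (an annulus has two boundary circles, each generating $H_1$) and can be read off from the explicit description of the Perron-Vannier surfaces for types $A_n$, $D_n$, $E_n$, where no individual boundary component bounds a subsurface on its own. Once that is in place, the edge-disjointness of $K_{T_1}, K_{T_2}$ delivers the homological contradiction, and Lemma~\ref{lem:disjoint multicurves} then closes the disjointness half of the statement.
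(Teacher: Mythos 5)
Your reading of the hypothesis is right — the lemma only makes sense (and is only applied) when every $s\in T_1$ and $t\in T_2$ satisfy $m_{st}=2$, so that $K_{T_1}$ and $K_{T_2}$ are disjoint subgraphs of $K_S$; this is what the paper also uses tacitly.

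However, your non-isotopy step has a genuine gap, and you put your finger right on it: the argument requires every individual component of $\partial\Sigma_{T_i}$ to be homologically nontrivial, and this is \emph{false}. The Perron--Vannier surface $\Sigma_{T}$ for $T$ of type $A_2$ (two annuli crossing once) has Euler characteristic $-1$ and is an orientable genus-one surface with a single boundary circle; that boundary circle is the commutator of the two meridians and is nullhomologous in $\Sigma_T$, hence nullhomologous in $\Sigma_S$ as well. The same happens for $A_{2k}$ and several $D$/$E$ types. So ``$[c_i]=0$'' is not a contradiction, and your $H_1$ argument cannot distinguish, say, two separating boundary curves coming from two disjoint $A_2$-subsurfaces.

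The paper's proof sidesteps this by working at the level of $\pi_1$ rather than $H_1$: realize each component of $\gamma_{T_i}$ as a cycle in the subgraph $K_{T_i}$, and observe that two disjoint, reduced, nonconstant cycles in a graph represent non-conjugate elements of the free group $\pi_1(K_S)$ (choose a spanning tree of $K_S$ extending spanning trees of $K_{T_1}$ and $K_{T_2}$; the two cycles then live in disjoint free factors). Homotopical nontriviality of each boundary component holds because $\Sigma_{T_i}$ is never a disk, so no boundary circle bounds a disk — this is the weaker condition you actually have, whereas homological nontriviality is the stronger one you don't. This also yields disjointness for free: since the two cycles already sit in the disjoint subgraphs $K_{T_1},K_{T_2}\subseteq K_S\subseteq\Sigma_S$, they have disjoint representatives, so there is no need to route through Lemma~\ref{lem:intersecting boundary}, Lemma~\ref{lem:disjoint multicurves}, and the commuting-multitwist criterion. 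If you want to salvage your write-up, replace the homological count with the conjugacy-class argument in the free group $\pi_1(K_S)$, and drop the multitwist detour.
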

\begin{proof}
Since $T_1, T_2\subseteq S$ are disjoint, by construction, the subgraphs $K_{T_1}, K_{T_2}\subseteq K_S$ are disjoint. 
Every connected component of $\gamma_{T_i}$ can be realized as a cycle contained in $K_{T_i}$. Any two disjoint cycles 
in a graph are non-isotopic. The conclusion follows.
\end{proof}

\begin{prop}[{\cite{CrispParis2001}}]\label{prop:small type rep}
For every small type Artin group $A$ with the standard generating set $S$, there exists a surface with boundary $\Sigma_S$ and a homomorphism $\rho:A\to \Mod(\Sigma_S)$ where 
\begin{enumerate}[label=(\alph*)]
\item for each $s\in S$, $\rho(s)$ is the Dehn twist about a simple closed curve $\gamma_s$,
\item the simple closed curves $\gamma_s, \gamma_t$ are disjoint $\iff$ $m_{st} = 2$,
\item the simple closed curves $\gamma_s, \gamma_t$ intersect exactly once $\iff$ $m_{st} = 3$.
\end{enumerate}
Moreover, 
\begin{enumerate}[resume*]
\item for every irreducible spherical subset $T\subseteq S$, $\rho(z_T^2)$ is the multitwist about a multicurve $\gamma_T$ which is the boundary of the subsurface $\Sigma_T$, and
\item for every irreducible spherical $T\subseteq S$ and $s\in S-T$, the simple closed curve $\gamma_s$ and the multicurve $\Gamma_T$ are disjoint if and only if $[s,t] = 1$ for all $t\in T$.
\end{enumerate}
\end{prop}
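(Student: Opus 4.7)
The plan is to split the statement into three parts coming from three essentially independent sources: (a)--(c) follow from the Crisp--Paris construction, (d) is a computation of Labruère--Paris on powers of the Garside element, and (e) is a short deduction from Lemma~\ref{lem:intersecting boundary} that already uses the machinery built up in this section.

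First I would recall the Crisp--Paris surface $\Sigma_S=\bigcup_{s\in S} An_s$ and its building rules: the annuli $An_s$ and $An_t$ are glued along a single square when $m_{st}=3$ and are taken to be disjoint when $m_{st}=2$. Declaring $\gamma_s$ to be the meridian of $An_s$ makes (b) and (c) true by construction. For (a), the assignment $s\mapsto T_{\gamma_s}$ extends to a homomorphism $\rho\colon A\to \Mod(\Sigma_S)$ because Dehn twists about disjoint curves commute (handling $m_{st}=2$) and Dehn twists about curves intersecting once satisfy the braid relation $xyx=yxy$ (handling $m_{st}=3$); this is precisely \cite{CrispParis2001}.

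For (d), the plan is to cite \cite[Prop.\ 2.12]{LabruereParis01}, which identifies $\rho(z_T^2)$ with a multitwist supported on $\partial\Sigma_T$ for each irreducible spherical $T\subseteq S$; we denote this multicurve by $\gamma_T$ (so $\gamma_T=\Gamma_T$ up to the power of the center taken).

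Finally, (e) is a two-line consequence of the already established material. By Lemma~\ref{lem:intersecting boundary}, for $s\in S-T$ the curve $\gamma_s$ is disjoint from $\Gamma_T$ if and only if $\gamma_s$ is disjoint from $\gamma_t$ for every $t\in T$. By (b), this happens if and only if $m_{st}=2$ for every $t\in T$, which in $A$ is precisely the statement that $[s,t]=1$ for every $t\in T$. The only part that is not an outright citation is (e), and it is immediate once Lemma~\ref{lem:intersecting boundary} and (b) are in hand, so no real obstacle remains.
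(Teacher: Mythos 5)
Your proposal is correct and follows essentially the same route as the paper: parts (a)--(c) are cited from Crisp--Paris, part (d) from Labru\`ere--Paris, and part (e) is deduced from Lemma~\ref{lem:intersecting boundary} together with the fact that $\gamma_s$ and $\gamma_t$ are disjoint precisely when $m_{st}=2$ (which in turn is equivalent to $[s,t]=1$ via van der Lek's embedding of $A_{\{s,t\}}$). The paper's one-line proof of (e) cites Lemma~\ref{lem:intersecting boundary} and Lemma~\ref{lem:disjoint multicurves}; your version instead invokes (b) directly at the last step, which is if anything slightly cleaner since it makes explicit the passage from disjointness of curves to $m_{st}=2$ to the commutation relation in $A$ itself.
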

\begin{proof}
The fact that $\rho$ is a homomorphism follows from standard relations between Dehn twists, see \cite[Prop 4]{CrispParis2001}. The parts (a), (b), (c) follow from \cite{CrispParis2001} as well.
Part (d) follows from \cite{LabruereParis01} (see discussion above). Finally part (e) is a consequence of Lemma~\ref{lem:intersecting boundary} and Lemma~\ref{lem:disjoint multicurves}. 
\end{proof}

Let $A$ be an Artin group with standard generating set $S$. 
We say $A$ is \emph{free-of-infinity} if $m_{st}<\infty$ for all $s,t\in S$.

\begin{prop}[{\cite{CrispParis2001}}]\label{prop:no infinity rep}
Let $A$ be a free-of-infinity Artin group. Then there exists a small type Artin group $\widetilde A$ with standard generating set $\widetilde S$ and a homomorphism $\phi: A\to \widetilde A$ such that
\begin{itemize}
\item there exists a partition $\displaystyle \bigsqcup_{s\in S} I(s)$ of $\widetilde S$ such that the elements of $I(s)$ pairwise commute and $\displaystyle \phi(s) = \prod_{r\in I(s)} r$,
\item $m_{st} = 2$ if and only if every element of $I(s)$ and every element of $I(t)$ commute, and
\item  if $m_{st} \geq 3$ then the subgroup generated by $I(s)\cup I(t)$ is a direct product of braid groups on $m_{st}$ strands.
\end{itemize}
Let $\rho\circ \phi:A\to \Mod(\Sigma_{\widetilde A})$ be the composition of the homomorphism $\phi$ with the homomorphism $\rho:\widetilde A\to \Mod(\Sigma_{\widetilde A})$ from Proposition~\ref{prop:small type rep}. Then 
\begin{enumerate}[label=(\alph*)]
\item for each $s\in S$,  $\rho\circ \phi(s)$ is a multitwist about a multicurve $\gamma_s = \bigcup_{r\in I(s)} \gamma_r $, 
\item $m_{st} = 2$ if and only if every component of $\gamma_s$ and every component of $\gamma_t$ are disjoint,
\item for every spherical subset $T\subset S$, $\rho(z_T^2)$ is the multitwists about a multicurve $\gamma_T$, and
\item for every $T\subseteq S$ and $s\in S-T$, the multicurve $\gamma_s$ and the multicurve $\gamma_T$ are disjoint if and only if $[s,t] = 1$ for all $t\in T$.
\end{enumerate}
\end{prop}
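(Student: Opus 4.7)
My plan is to cite Crisp--Paris \cite{CrispParis2001} for the existence of $\widetilde A$, $\widetilde S$, and $\phi$ satisfying the three bulleted properties (their ``LCM homomorphism''), and then derive (a)--(d) using Proposition~\ref{prop:small type rep}. Briefly, for each edge $st$ of the Coxeter diagram of $A$ with $m := m_{st} \ge 3$, the construction introduces a type $A_{m-1}$ subdiagram in $\widetilde{S}$ with generators $\sigma_1, \dots, \sigma_{m-1}$, and sets $\phi(s)$ (resp.\ $\phi(t)$) to be the product of odd-indexed (resp.\ even-indexed) $\sigma_i$ inside the resulting copy of $B_m$; the length-$m$ braid relation for $\phi(s), \phi(t)$ is then a consequence of the braid relations in $B_m$, while commutation $(m_{st}=2)$ is built in by taking disjoint copies.

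Parts (a) and (b) are then immediate from Proposition~\ref{prop:small type rep}. For (a), $\phi(s) = \prod_{r \in I(s)} r$ is a product of pairwise commuting generators, so by Proposition~\ref{prop:small type rep}(a)--(b) the curves $\{\gamma_r : r \in I(s)\}$ are pairwise disjoint and $\rho\phi(s)$ is the multitwist about $\gamma_s := \bigcup_{r \in I(s)} \gamma_r$. For (b), if $m_{st} = 2$ every pair in $I(s) \times I(t)$ commutes in $\widetilde A$; conversely, if $m_{st} \ge 3$, the third bullet forces at least one pair $\sigma_i \in I(s), \sigma_{i+1} \in I(t)$ in the $B_{m_{st}}$-factor to fail to commute, and Proposition~\ref{prop:small type rep}(c) then gives an intersection between $\gamma_{\sigma_i}$ and $\gamma_{\sigma_{i+1}}$.

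For (c), the intermediate step I would verify is that for every spherical $T \subseteq S$, the ``thickening'' $\widetilde T := \bigsqcup_{t \in T} I(t) \subseteq \widetilde S$ is spherical in $\widetilde A$ and that $\phi(z_T^2)$ can be written as a product of powers of central elements $z_{\widetilde T_i}^2$ of irreducible spherical subgroups $A_{\widetilde T_i} \subseteq A_{\widetilde T}$. Each $\rho(z_{\widetilde T_i}^2)$ is then a multitwist about $\partial \Sigma_{\widetilde T_i}$ by Proposition~\ref{prop:small type rep}(d), and these multitwists commute (as images of pairwise commuting central elements), so by Lemma~\ref{lem:disjoint multicurves} their underlying multicurves are disjoint, and the product $\rho\phi(z_T^2)$ is itself a multitwist about a multicurve $\gamma_T := \bigsqcup_i \partial \Sigma_{\widetilde T_i}$. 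Finally, (d) reduces componentwise to Proposition~\ref{prop:small type rep}(e): $\gamma_s$ is disjoint from $\gamma_T$ if and only if each $\gamma_r$ ($r \in I(s)$) is disjoint from each $\partial \Sigma_{\widetilde T_i}$, if and only if each $r \in I(s)$ commutes in $\widetilde A$ with every element of $\widetilde T$, if and only if $m_{st} = 2$ for all $t \in T$, if and only if $[s,t] = 1$ for all $t \in T$.

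The main obstacle is the intermediate claim in (c): that the LCM homomorphism preserves sphericity under the thickening $T \mapsto \widetilde T$ and sends the square of the central element of $A_T$ to a product of squares of centers of irreducible spherical subgroups of $A_{\widetilde T}$. This is essentially a bookkeeping exercise tracing through the Crisp--Paris construction, using that each $B_{m_{st}}$-factor is spherical of type $A_{m-1}$ with known center, but it is the one place where we actually need structural input about the LCM homomorphism rather than just the three bulleted properties.
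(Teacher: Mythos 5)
Your approach lines up with the paper's: cite Crisp--Paris for the LCM homomorphism $\phi$ and its three bulleted properties, derive (a) and (b) from that construction together with Proposition~\ref{prop:small type rep}, and reduce (d) to the disjointness criteria. For (d), you route through Proposition~\ref{prop:small type rep}(e) componentwise; the paper instead applies Lemma~\ref{lem:intersecting boundary} and Lemma~\ref{lem:disjoint multicurves} directly, but since those two lemmas are exactly what prove part~(e), these are the same argument at one level of indirection.

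The one place where you diverge is part~(c), and you correctly identify it as the crux. The paper does not prove (c) from scratch: it cites \cite[Lem~6.1]{JankiewiczSchreveRAAGsubgroupsArtin}, an earlier paper of the authors, for precisely this claim. Your proposed intermediate step --- that $\widetilde T = \bigsqcup_{t\in T} I(t)$ is spherical and $\phi(z_T^2)$ decomposes as a product of (positive even) powers of the $z_{\widetilde T_i}$'s --- is the right shape of statement, but it is genuinely more than bookkeeping. In particular $\phi(\Delta_T)\neq\Delta_{\widetilde T}$ in general (e.g.\ already for the dihedral $B_4\hookrightarrow$ type $A_3$, where $\phi(\Delta)$ swaps the odd- and even-indexed generators while $\Delta_{A_3}$ fixes them), and the relationship $z_T$ versus $\Delta_T$ or $\Delta_T^2$ depends on the type of $T$, so the exact powers appearing must be tracked case by case. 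This is exactly the content of the cited lemma, and leaving it as a black box is what the paper does. So: same approach as the paper, with the gap you flagged being the thing the paper closes by citation rather than by argument.
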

\begin{proof}
The homomorphism $\phi$ is described in \cite{Crisp99} and also in \cite{CrispParis2001}. Parts (a) and (b) follow directly from the construction. Part (c) is proven in \cite[Lem 6.1]{JankiewiczSchreveRAAGsubgroupsArtin}. Part (d) follows from Lemma~\ref{lem:intersecting boundary} and Lemma~\ref{lem:disjoint multicurves}.
\end{proof}

\section{The main theorem}
We will need the following lemma.
\begin{lem}\label{lem:cd}
Let $A_S$ be an Artin group which splits as a product $A_S = A_{U}\times A_{V}$ where $A_U$ is the maximal spherical factor. Suppose that $\cd A_S<\infty$. Then $\cd A_S = \cd A_U + \cd A_V$.
\end{lem}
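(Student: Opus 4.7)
The inequality $\cd A_S \leq \cd A_U + \cd A_V$ is automatic: tensoring projective resolutions of $\mathbb{Z}$ over $\mathbb{Z}A_U$ and $\mathbb{Z}A_V$ yields a projective resolution over $\mathbb{Z}A_S$ of length $\cd A_U + \cd A_V$. The real content of the lemma is the reverse inequality, and my plan is to deduce it from the standard product formula for cohomological dimension in the presence of a type-$FP$ factor.

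The key input is that $A_U$, being spherical, is of type $F$: by Deligne's theorem, the Salvetti complex of $A_U$ is a finite Eilenberg-MacLane space of dimension $|U|$, so $A_U$ has a finite classifying space, is in particular of type $FP$, and satisfies $\cd A_U = |U|$. Once this is in hand, the lemma reduces to the following standard fact: if $G$ is of type $FP$ and $H$ is any group with $\cd H < \infty$, then $\cd(G \times H) = \cd G + \cd H$. Applied with $G = A_U$ and $H = A_V$ (noting $\cd A_V \leq \cd A_S < \infty$ because $A_V \leq A_S$), this gives exactly the desired equality.

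To execute the standard fact, write $n = \cd A_U$ and $m = \cd A_V$. Since $A_U$ is of type $FP$ with $\cd A_U = n$, we have $H^n(A_U; \mathbb{Z}A_U) \ne 0$. Choose an $A_V$-module $M$, free as an abelian group, with $H^m(A_V; M) \ne 0$. Let $\widetilde M := \mathbb{Z}A_U \otimes_{\mathbb{Z}} M$, an $A_S$-module on which $A_U$ acts via the first factor and $A_V$ via the second. A finite f.g.\ projective resolution of $\mathbb{Z}$ over $\mathbb{Z}A_U$ identifies $H^q(A_U; \widetilde M) \cong H^q(A_U; \mathbb{Z}A_U) \otimes M$ as $\mathbb{Z}A_V$-modules. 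Running the Lyndon-Hochschild-Serre spectral sequence for $1 \to A_U \to A_S \to A_V \to 1$, the corner term
$$E_2^{m,n} = H^m\bigl(A_V;\, H^n(A_U;\mathbb{Z}A_U) \otimes M\bigr)$$
admits no nontrivial differentials (incoming ones emanate from columns of index $< m$ and rows of index $> n$, and outgoing ones land in $H^{>m}(A_V;-) = 0$), so it survives to $E_\infty^{m,n}$. Hence $H^{n+m}(A_S; \widetilde M) \ne 0$ and $\cd A_S \geq n+m$.

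The one genuine subtlety is ensuring the corner term is really nonzero, since a priori $H^n(A_U;\mathbb{Z}A_U)$ could carry torsion. This is the step I expect to be the main care-in-bookkeeping obstacle; it can be handled by working with $\mathbb{Q}$-coefficients throughout (using $\cd_\mathbb{Q} A_U = |U|$, which follows from the top nonzero rational cohomology of the finite aspherical Salvetti complex of $A_U$) or by selecting $M$ compatibly with any torsion present in $H^n(A_U;\mathbb{Z}A_U)$.
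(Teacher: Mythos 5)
Your overall strategy matches the paper's: reduce to the general product formula for $\cd$ of a direct product in which one factor is of type $FP$. The paper does exactly this by citing \cite[Thm 5.5]{BieriBook81}, which states that for $G = N \times Q$ one has $\cd G = \cd N + \cd Q$ provided $\cd Q < \infty$, $N$ is of type $FP$, and $H^n(N;\mathbb{Z}N)$ is \emph{free} for $n = \cd N$. Your spectral-sequence sketch is essentially a re-derivation of this theorem rather than a citation.

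The problem is that you correctly isolate the crux --- the corner term $H^m\bigl(A_V;\,H^n(A_U;\mathbb{Z}A_U)\otimes M\bigr)$ is only obviously nonzero once you control $H^n(A_U;\mathbb{Z}A_U)$ --- and then leave it open. Neither of the two fixes you float actually closes the gap. Working with $\mathbb{Q}$-coefficients throughout would give $\cd A_S \geq \cd_\mathbb{Q} A_U + \cd_\mathbb{Q} A_V$, but there is no a priori reason that $\cd_\mathbb{Q} A_V = \cd A_V$; since $A_V$ is an arbitrary Artin group of finite $\cd$, you cannot assume its top integral cohomological dimension is detected rationally. And ``selecting $M$ compatibly with any torsion present'' is not an argument: you would need to know what the torsion is, and you don't. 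The paper supplies precisely the missing input you need: by Squier's theorem \cite[Thm B]{Squier94} (see also \cite{Bestvina99}), every spherical Artin group is a duality group, so $H^n(A_U;\mathbb{Z}A_U)$ is torsion-free, hence free abelian. With that fact in hand, Bieri's hypotheses are satisfied and the lemma follows immediately (and, incidentally, the freeness also makes your corner term manifestly nonzero, since $H^n(A_U;\mathbb{Z}A_U)\otimes M$ then contains $M$ as an $A_V$-module direct summand). You should cite the duality-group property of spherical Artin groups rather than trying to dodge the torsion question.
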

\begin{proof} By \cite[Thm 5.5]{BieriBook81} a group $G=N\times Q$ has $\cd G = \cd N + \cd Q$ provided that 
\begin{itemize}
\item $\cd Q< \infty$, and
\item $N$ is of type FP and $H^n(N, \mathbb Z N)$ is free for $n= \cd N$.
\end{itemize}
Clearly $\cd A_V<\infty$ since $\cd A_S<\infty$. Since $A_U$ is a spherical Artin group, $A_U$ has type $FP$. By \cite[Thm B]{Squier94} (see also \cite{Bestvina99}) $A_U$ is a duality group, so $H^n(A_U, \mathbb Z A_U)$ is free. The conclusion follows.
\end{proof}

\begin{thm}\label{thm:main general}
Let $A_S$ be an Artin group of infinite type with the standard generating set $S$ such that $A_S$ has no spherical factors. 
If $\cd A = \cd A_T= |T|$ for some spherical subset $T\subseteq S$, then $A_S$ has trivial center.
In particular if $A_S$ satisfies the $K(\pi,1)$-conjecture, then $A_S$ has trivial center.
\end{thm}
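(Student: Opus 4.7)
The plan is to suppose for contradiction that $z\in Z(A_S)$ is nontrivial, and derive that some irreducible component of the spherical set $T$ must itself be a spherical factor of $A_S$, contradicting the hypothesis.

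First I push a power of $z$ into $A_T$ using cohomological dimension. Since $\cd A_S<\infty$, the group $A_S$ is torsion-free, so $\langle z\rangle\cong\mathbb Z$ and it commutes with $A_T$. If $\langle z\rangle\cap A_T$ were trivial, then $H:=\langle A_T,z\rangle$ would be isomorphic to $A_T\times\mathbb Z$; the product formula for cohomological dimension (as used in the proof of Lemma~\ref{lem:cd}) would give $\cd H=|T|+1$, contradicting $\cd H\leq \cd A_S=|T|$. Hence $z^n\in A_T$ for some $n\geq 1$, and centrality forces $z^n\in Z(A_T)$. Decomposing $T=T_1\sqcup\cdots\sqcup T_k$ into irreducible components gives $Z(A_T)=\bigoplus_i\langle z_{T_i}\rangle$, so I can write $z^n=z_{T_1}^{a_1}\cdots z_{T_k}^{a_k}$; since $z^n\neq 1$ by torsion-freeness, some $a_i\neq 0$. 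Fix such an $i$; I claim $A_{T_i}$ is a spherical factor of $A_S$, i.e.\ every $s\in S\setminus T_i$ commutes with every $t\in T_i$. For $s\in T\setminus T_i$ this is immediate from the irreducible decomposition of $T$.

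The key case is $s\in S\setminus T$, handled via the Crisp-Paris mapping class group representation of Proposition~\ref{prop:no infinity rep} (assuming first that $A_S$ is free-of-infinity; the general case reduces to this via the Godelle-Paris reduction). By part (c) of that proposition and Lemma~\ref{lem:disjoint sphericals}, the element $\rho\circ\phi(z^{2n})$ is a nontrivial multitwist supported on $\bigsqcup_{a_j\neq 0}\gamma_{T_j}$, and it commutes with the multitwist $\rho\circ\phi(s)$ about $\gamma_s$. The two multicurves share no simple closed curve (the former consists of subsurface boundaries of $\Sigma_{\widetilde{T_j}}$, the latter of meridians $\gamma_r$ for $r\in I(s)$ with $I(s)$ disjoint from $\widetilde{T}$), so Lemma~\ref{lem:disjoint multicurves} forces $\gamma_s$ to be disjoint from each $\gamma_{T_j}$ with $a_j\neq 0$; Proposition~\ref{prop:no infinity rep}(d) then yields $[s,t]=1$ for every $t\in T_j$, in particular for $t\in T_i$. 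Thus $A_{T_i}$ is a spherical factor of $A_S$, a contradiction. The main obstacle will be this MCG step, and in particular extending the argument from free-of-infinity to Artin groups with some $m_{st}=\infty$, where Proposition~\ref{prop:no infinity rep} does not apply directly and the Godelle-Paris reduction is needed.
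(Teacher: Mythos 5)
Your free-of-infinity argument is correct and is essentially a reorganization of the paper's: where the paper picks, for each irreducible piece $T_i$, a witness $s_i\in S\setminus T$ with $[s_i,t_i]\neq 1$ and derives $[\rho(y^m),\rho(s_1)]\neq 1$ directly, you instead show that centrality of $z$ would force \emph{every} $s\in S\setminus T$ to commute with $T_i$ (via Lemma~\ref{lem:disjoint multicurves} and Proposition~\ref{prop:no infinity rep}(d)), so that $A_{T_i}$ would be a spherical factor. These are the same argument phrased in contrapositive form, and the cd-based step $z^n\in A_T$ is word-for-word the paper's. This part is fine.

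The gap is the general case, and it is more than the afterthought you make it out to be. You write that ``the general case reduces to this via the Godelle-Paris reduction,'' but the Godelle--Paris reduction theorem is the implication ``(center conjecture for all free-of-infinity Artin groups) $\Rightarrow$ (center conjecture for all Artin groups).'' You have proved something strictly weaker: the center conjecture for those free-of-infinity Artin groups satisfying the additional hypothesis $\cd A=\cd A_T=|T|$. You cannot feed this weaker input into their black-box reduction, because when one unwinds it the free-of-infinity groups that appear need not inherit your cohomological-dimension hypothesis. What the paper actually does is redo the induction: it splits $A_S=A_{S\setminus\{v\}}*_{A_{S\setminus\{v,w\}}}A_{S\setminus\{w\}}$ along an $m_{vw}=\infty$ edge, verifies $\cd A_{S\setminus\{v\}}=\cd A_T$ still holds, uses \cite[Lem 3.2]{GodelleParis2012} only to pass centers through the amalgam, and then must handle the genuinely delicate case where $A_{S\setminus\{v\}}$ acquires a spherical factor --- this needs Lemma~\ref{lem:cd} to recompute $\cd$ after stripping spherical factors and, crucially, a quotient trick replacing every $m_{tv}=\infty$ label on $T\cup\{v\}$ by the label $7$ to land in a free-of-infinity Artin group whose cd and irreducibility can still be controlled (with the degenerate dihedral case handled separately). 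None of this is recoverable from ``apply the Godelle--Paris reduction,'' so as written the proposal only establishes the free-of-infinity case of the theorem.
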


\begin{proof}[Proof for free-of-infinity case]
First suppose that $A_S$ is free-of-infinity.
Let $T\subseteq S$ be a maximal spherical subset such that $\cd A_S = \cd A_T$.
Let $T_1\sqcup T_2 \sqcup \dots\sqcup T_n$ be the decomposition of $T$ into irreducible spherical subsets inducing the decomposition $A_T = A_{T_1}\times\dots\times A_{T_n}$. 
Since $A_S$ has no spherical factors for each $i=1,\dots, n$ there exists $s_i\in S-T$ such that $[s_i, z_{T_i}]\neq 1$ as otherwise $A_{T_i}$ would be a spherical factor of $A_S$.
In particular, for each $i=1, \dots, n$, there exists $t_i\in T_i$ such that $[s_i,t_i]\neq 1$.

Consider the representation of $\rho:A_S \to \Mod (\Sigma_S)$ from Proposition~\ref{prop:no infinity rep}. 
By Proposition~\ref{prop:no infinity rep}, $\rho(s_i)$ and $\rho(z_{T_i})$ are the Dehn twists about multicurves $\gamma_{s_i}$ and $\gamma_{T_i}$ respectively, where $\gamma_{s_i}$ and $\gamma_{T_i}$ intersect.

Suppose that $A_S$ has nontrivial center and let $y\in Z(A_S)$ with $y\neq e$.
Note that $y$ has infinite order since $A_S$ is torsion-free, as $\cd A_S<\infty$.
If $y^k\notin A_T$ for any $k\neq 0$, then $\langle A_T, y\rangle \simeq A_T\times \mathbb Z$ is a subgroup of $\cd A+1$, which is a contradiction.
Thus there exists $k\in \mathbb N$ such that $y^k\in A_T$. Then $y^k\in Z(A_T)$, i.e.\ $y^m = \prod_{i=1}^n z_{T_i}^{m_i}$ for some $m>0$ and at least one of $m_1,\dots, m_n$, say $m_1$, is non-zero. 
By Lemma~\ref{lem:disjoint sphericals}, $\rho(y^m)$ is a multitwist about a multicurve $\gamma = \sqcup \gamma_{T_i}$ in $\Sigma$ where the union is taken over all $i$ such that $m_i\neq 0$. 
In particular, $\gamma$ intersects $\gamma_{s_1}$. 
By Lemma~\ref{lem:disjoint multicurves}, $[\rho(y^m), \rho(s_1)]\neq 1$. 
Thus $[y, s_1]\neq 1$. 
This contradicts the fact that $y$ is a central element of $A$.
\end{proof}

\begin{proof}[Proof for general case]
The general case is induction on the cardinality of $S$.
Suppose $\cd A_S = \cd A_T$ where $T\subseteq S$ is a spherical subset. 
Suppose there exist generators $v,w\in S$ such that $m_{vw} = \infty$. 
The group $A_S$ splits as an amalgamated product $A_{S\setminus\{v\}}*_{A_{S\setminus\{v,w\}}} A_{S\setminus\{w\}}$. 
Since $T$ cannot contain both $v$ and $w$, we have $T\subseteq S\setminus \{v\}$ or $T\subseteq S\setminus\{w\}$. 
Without loss of generality we assume that $T\subseteq S\setminus \{v\}$.
It follows that $\cd A_{S\setminus\{v\}} = \cd A_T$, as $\cd A_{S\setminus\{v\}} \le \cd A_S$.
If $A_{S\setminus\{v\}}$ has no spherical factor,  then by induction $A_{S\setminus\{v\}}$ has trivial center.
By \cite[Lem 3.2]{GodelleParis2012} the center of the amalgamated product $A$ is also trivial.

Now suppose that $A_{S\setminus \{v\}}$ has a nontrivial spherical factor. 
Let $$A_{U_1}\times \dots \times A_{U_p}\times A_{V_1}\times \dots \times A_{V_q}$$ be the decomposition of $A_{S\setminus \{v\}}$ into irreducible factors where each $A_{U_i}$ is spherical and each $A_{V_j}$ has infinite type. 
Let $A_V =  A_{V_1}\times \dots \times A_{V_q}$. 
By maximality $U_i\subseteq T$ for all $i=1,\dots, p$. 
Let $T' = V\cap T$. 
Then $\cd A_{V} = \cd A_{T'}$. Indeed by Lemma~\ref{lem:cd} ,$$\cd A_V = \cd A_{S\setminus\{v\}}-\Sigma_{i=1}^p\cd A_{U_i} = \cd A_T -\Sigma_{i=1}^p\cd A_{U_i} = A_{T'}.$$ By the inductive assumption $Z(A_{V}) = \{1\}$, and thus $Z(A_{S\setminus\{v\}}) \subseteq \langle z_{U_1}\rangle\times \dots \times \langle z_{U_n}\rangle $.

 Since $A_S$ does not have a spherical factor, for every $i=1, \dots, n$ we have $[v, z_{U_i}]\neq 1$. 
In particular, each set $U_i$ contains a standard generator $u_i$ such that $m_{vu_i} \geq 3$. 
Since $Z(A_{S\setminus\{v\}})\subseteq A_T$ and by \cite[Lem 3.2]{GodelleParis2012} $Z(A)\subseteq Z(A_{S\setminus\{v\}})$, it suffices to prove that $v$ does not commute with any nontrivial element of $Z(A_T) = \langle z_{T_1}, \dots, z_{T_n}\rangle$.
By maximality of $T$, $A_{T\cup\{v\}}$ is not spherical.
By the discussion above, $A_{T\cup\{v\}}$ is irreducible, and in particular it has no spherical factors.
If $A_{T\cup\{v\}}$ is free-of-infinity, we are done.

We now assume that $A_{T\cup\{v\}}$ is not free-of-infinity.
Consider the quotient homomorphism $\phi:A_{T\cup\{v\}}\to A_{\overline  T\cup\{\overline v\}}$, where for every $t\in T$ such that $m_{tv} = \infty$ the corresponding generators $\overline t, \overline v \in \overline T\cup\{\overline v\}$ have $m_{\overline t\overline v} = 7$. 
The group $A_{\overline  T\cup\{\overline v\}}$ is irreducible. 
The only irreducible spherical Artin group containing label $7$ is the dihedral Artin group. 
If $A_{\overline  T\cup\{\overline v\}}$ is the dihedral Artin group, then $A_{T\cup\{v\}} = F_2$ and so $\cd A_{S} = 1$, i.e.\ $A_S = F(S)$. Then clearly, $A_S$ has trivial center.
Otherwise $A_{\overline  T\cup\{\overline v\}}$ is irreducible and has infinite type. Also $\cd A_{\overline  T\cup\{\overline v\}} = \cd A_{\overline T}$. 
By the free-of-infinity case, $[\overline v, \overline y]\neq 1$ for any nontrivial $\overline y\in \langle z_{\overline T_1}, \dots, z_{\overline T_n}\rangle$, as otherwise $y$ would be a central element of $A_{\overline T\cup \{\overline v\}}$. 
Thus $[v, y]\neq 1$ for any nontrivial $y\in \langle z_{T_1}, \dots, z_{T_n}\rangle$.
This completes the proof.
\end{proof}

\bibliographystyle{alpha}
\bibliography{../../kasia}

\end{document}